\newtheorem{thm}{Theorem}
\newtheorem{lem}[thm]{Lemma}
\newtheorem{prop}[thm]{Proposition}
\newtheorem{cor}[thm]{Corollary}
\theoremstyle{definition}
\newtheorem{defn}{Definition}
\theoremstyle{remark}
\newtheorem{rem}{Remark}
\newtheorem{exmp}{Example}[section]
\newcommand\motx{\mathcal O_{\tilde X}}
\newcommand\tx{{\tilde X}}
\newcommand\mo{\mathcal O}
\newcommand\lra{\longrightarrow}
\newcommand\JF{\operatorname{Jac}F}
\newcommand\CC{\mathbbm k}
\newcommand\PP{\mathbb P}
\newcommand\tp{\tilde{{\mathbb P}^{4}}}
\DeclareMathOperator{\Sing}{Sing}
\title[Hodge numbers of hypersurfaces with ordinary triple points]{Hodge
  numbers of hypersurfaces in $\PP^{4}$ with ordinary triple points}
\author{S\l awomir Cynk}
\address{Institute of Mathematics,
Jagiellonian University,
ul. {\L}ojasiewicza 6,
30-348 Krak\'ow,
Poland}\email{slawomir.cynk@uj.edu.pl}
\thanks{Research partially supported by the National Science Center grant no. 2014/13/B/ST1/00133}
\subjclass[2010]{Primary: {14J30};  Secondary {14J17}}
\keywords{Hodge number, triple points, defect}
\begin{document}

\begin{abstract}
We give a formula for the Hodge numbers of hypersurfaces  in $\PP^{4}$
with ordinary triple points. 
 \end{abstract}
\maketitle
\section*{Introduction}

Let $X$ be a degree $d$ hypersurface in the projective four--space
$\PP^{4}$ with ordinary triple   points as the only
singularities. Denote by $\Sigma=\Sing(X)$  the  singular locus 
of $X$ and by $\tilde X$
the natural crepant resolution  of singularities of $X$ by the
blow--up of the singular locus $\Sigma\subset\PP^{4}$.

The main goal of the present paper it to proof the following formula
for the Hodge numbers of $\tilde X$  (Cor.~\ref{cor:main})
  \[
   h^{1,1}(\tilde X)=1+\mu+\delta,\qquad
 h^{1,2}(\tilde X)=h^{1,2}(X_{smooth})-11\mu+\delta,
 \]
where $X_{\rm smooth}$ is a smooth hypersurface in $\PP^{4}$ of the
same degree, $\mu=\#\Sigma$ is the number of singularities of $X$ and
$\delta$ is non--negative integer called the defect. 
Our main result is an extension of  Werner's defect formula for nodal
hypersurfaces (\cite{Werner}) as explained in Remark~\ref{rem:fin}. 

Three dimensional varieties with ordinary triple points admit
crepant divisorial resolution of singularities, 
they are very suitable for explicit constructions. Probably the main
obstacle in applications was the lack of formulas for the invariants
(Betti numbers or Hodge numbers). 
Our formula can be also used (via a triple cyclic covering,
cf.~Rem.~\ref{rem:wps}) to study surface of degree divisible by  three with
ordinary triple points as  the only singularities, surfaces of a small
degree with triple points have been studied in
\cite{stevens,EPS}. 

The proof of the main theorem is based on a study of exact sequences
of cohomology groups of differential forms (with logarithmic poles),
cf. \cite{CR}. The main difference is that in the present paper it is not
sufficient to determine the dimensions of various cohomology groups
but we need to study the rank of the natural map
$H^{1}(\Omega^{3}_{\tp}(\tx)\lra H^{1}(\Omega^{3}_{\tx}(\tx)))$ which
was possible because of an explicit represantation of basis of
various cohomology group (based on f.i. \cite{pet-st}). 
 
\section{Logarithmic differential forms}

Throughout the paper $X$ is a degree $d$
hypersurface  in the projective space $\PP^{4}$ with ordinary triple
points as the only singularities. Let
$\Sigma:=\{P_{1},\dots,P_{\mu}\}$ be the singular locus of $X$ and
$F\in \CC[X_{0},\dots,X_{4}]$ the homogeneous equation of $X$. 
The strict transform $\tx$ of $X$ under the blow--up
$\sigma:\PP^{4}\lra \tp$ is a crepant resolution of singularities. 
Let $E_{i}:=\sigma^{-1}(P_{i})$ be the exceptional divisor of $\sigma$
over a singular point $P_{i}$ and denote
$E:=\sigma^{-1}(\Sigma)=E_{1}+\dots+E_{\mu}$.
In this situation the following formulas hold (cf. \cite{CR})
\begin{prop}\leavevmode\label{prop:intro}
  \begin{enumerate}
  \item $\sigma_{\ast}\mo_{\tp}(-mE)\cong \mathcal J_{m\Sigma},\
    \text{ for }m\ge0$,
  \item $R^{i}\sigma_{\ast}\mo_{\tp}(-mE)=0$, for $i\not=0, m\ge0$,
  \item $H^{i}(\motx)=0$, for $i=1,2$,
  \item $H^{1}(\Omega^{3}_{\tp})=0$ for $i\le2$,
  \item $H^{i}(\Omega^{4}_{\tp}(\tx))\cong
    H^{i}(\Omega^{4}_{\PP^{4}}(X))$,
  \item $H^{i}(\Omega^{4}_{\tp}(2\tx))\cong
    H^{i}(\Omega^{4}_{\PP^{4}}(2X)\otimes \mathcal J_{3\Sigma})$.
   \end{enumerate}
\end{prop}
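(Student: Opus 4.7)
\medskip

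\noindent\textbf{Proof plan.} The six statements split into two groups: parts~(1) and~(2) are the fundamental computation of the higher direct images of $\sigma$, and once they are in hand parts~(3)--(6) follow by rewriting each sheaf as the tensor product of a pullback from $\PP^{4}$ with a multiple of the exceptional divisor $E$ and then applying the projection formula and the Leray spectral sequence. The two bookkeeping identities I use throughout are
\[
K_{\tp}\cong\sigma^{\ast}K_{\PP^{4}}\otimes\mo_{\tp}(3E)\qquad\text{and}\qquad \sigma^{\ast}X=\tx+3E,
\]
the first being the canonical bundle formula for a blow--up of points in a smooth fourfold, the second expressing that each $P_{i}$ is a point of multiplicity~$3$ on $X$.

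For~(1)--(2) I argue by induction on $m$. The case $m=0$ is $R\sigma_{\ast}\mo_{\tp}=\mo_{\PP^{4}}$, the classical vanishing for a blow--up of a smooth subvariety. For the inductive step I use the short exact sequence
\[
0\lra \mo_{\tp}(-(m+1)E)\lra \mo_{\tp}(-mE)\lra \mo_{E}(m)\lra 0,
\]
where $\mo_{E}(m):=\mo_{\tp}(-mE)|_{E}=\bigoplus_{i}\mo_{\PP^{3}}(m)$, using $\mo_{\tp}(-E)|_{E_{i}}\cong\mo_{\PP^{3}}(1)$. The vanishing $H^{j}(\PP^{3},\mo(m))=0$ for $j>0$ and $m\ge0$ gives $R^{j}\sigma_{\ast}\mo_{E}(m)=0$ for $j>0$, while $\sigma_{\ast}\mo_{E}(m)$ is the skyscraper sheaf with stalks $H^{0}(\PP^{3},\mo(m))\cong\mathfrak m_{P_{i}}^{m}/\mathfrak m_{P_{i}}^{m+1}$, so $\sigma_{\ast}\mo_{E}(m)\cong\mathcal J_{m\Sigma}/\mathcal J_{(m+1)\Sigma}$. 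The long exact sequence of higher direct images then yields both~(1) and~(2) for $m+1$.

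The remaining parts follow essentially formally. For~(5) and~(6), the two identities above combine to give
\[
\Omega^{4}_{\tp}(k\tx)\cong \sigma^{\ast}\mo_{\PP^{4}}(kd-5)\otimes\mo_{\tp}\bigl((3-3k)E\bigr),
\]
and together with~(1)--(2) and the projection formula this yields~(5) for $k=1$ (a pure pullback) and~(6) for $k=2$ (using $\sigma_{\ast}\mo_{\tp}(-3E)\cong\mathcal J_{3\Sigma}$). For~(3), apply Serre duality on $\tp$ to the structure sequence $0\to\mo_{\tp}(-\tx)\to\mo_{\tp}\to\motx\to 0$; combined with $H^{i}(\mo_{\tp})=0$ for $i>0$ (the $m=0$ case of~(2)) this identifies $H^{i}(\motx)^{\vee}$ with $H^{3-i}(\PP^{4},\mo(d-5))$ for $i=1,2$, which vanishes because $\PP^{4}$ has no intermediate line--bundle cohomology. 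For~(4), the standard Hodge--theoretic blow--up formula writes $H^{q}(\tp,\Omega^{3}_{\tp})$ as $H^{q}(\PP^{4},\Omega^{3}_{\PP^{4}})$ plus terms of the form $H^{q-j}(\Sigma,\Omega^{3-j}_{\Sigma})$ for $1\le j\le 3$; since $\Sigma$ is zero--dimensional only $j=3$ can contribute, and $H^{q-3}(\Sigma,\mo_{\Sigma})$ vanishes for $q\le 2$, so $H^{q}(\Omega^{3}_{\tp})=H^{q}(\Omega^{3}_{\PP^{4}})=0$.

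The substantive work lies in~(1)--(2), and specifically in the identification $\sigma_{\ast}\mo_{E}(m)\cong\mathcal J_{m\Sigma}/\mathcal J_{(m+1)\Sigma}$ that translates exceptional data on $\tp$ into ideal--sheaf data on $\PP^{4}$. Beyond that, everything reduces to standard vanishing on projective spaces together with the two multiplicity identities.
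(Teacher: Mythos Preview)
The paper does not actually prove this proposition: it simply records the six assertions with the parenthetical reference ``(cf.\ \cite{CR})'' and moves on. Your proposal is therefore not competing with an argument in the paper but supplying one, and what you have written is correct and complete. The inductive computation of $R^{i}\sigma_{\ast}\mo_{\tp}(-mE)$ via the exact sequence on $E$, together with the identification of the restriction map with the quotient $\mathcal J_{m\Sigma}\to\mathcal J_{m\Sigma}/\mathcal J_{(m+1)\Sigma}$, is exactly the standard route; the remaining parts then follow, as you say, from the two identities $K_{\tp}=\sigma^{\ast}K_{\PP^{4}}(3E)$ and $\sigma^{\ast}X=\tx+3E$ via the projection formula, Leray, Serre duality (your use of part~(5) inside the proof of part~(3) is harmless since~(5) depends only on~(1)--(2)), and the Hodge blow--up decomposition for part~(4).
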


\begin{lem}\label{lem:h12}
\(\displaystyle h^{1,2}(\tx)=h^{0}(\Omega^{3}_{\tx}(\tx))-h^{0}(\Omega^{3}_{\tp}(\tx))
  + \dim\operatorname{Ker}(H^{1}\Omega^{3}_{\tp}(\tx)\lra
H^{1}\Omega^{3}_{\tx}(\tx)) 
\)
\end{lem}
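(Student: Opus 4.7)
The plan is to sandwich $\Omega^{3}_{\tp}(\log\tx)$ between $\Omega^{3}_{\tp}$ and $\Omega^{3}_{\tp}(\tx)$ via two short exact sequences and extract $h^{1,2}(\tx)$ from their long cohomology sequences, using the vanishings of Proposition~\ref{prop:intro}. Concretely, I would work with the Poincar\'e residue sequence
\[
0 \lra \Omega^{3}_{\tp} \lra \Omega^{3}_{\tp}(\log\tx) \lra \Omega^{2}_{\tx} \lra 0
\]
and the order-of-pole quotient sequence
\[
0 \lra \Omega^{3}_{\tp}(\log\tx) \lra \Omega^{3}_{\tp}(\tx) \lra \Omega^{3}_{\tx}(\tx) \lra 0.
\]
Both are valid because $\tx$ is a smooth divisor in the smooth fourfold $\tp$. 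The second sequence I would verify in local coordinates $(x_{1},\dots,x_{4})$ with $\tx=\{x_{1}=0\}$: the only generator of $\Omega^{3}_{\tp}(\tx)$ not already lying in $\Omega^{3}_{\tp}(\log\tx)$ is $dx_{2}\wedge dx_{3}\wedge dx_{4}/x_{1}$, and its class on $\tx$ trivializes $\omega_{\tx}\otimes N_{\tx/\tp}=\Omega^{3}_{\tx}(\tx)$.

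From the residue sequence together with $H^{i}(\Omega^{3}_{\tp})=0$ for $i\le 2$ (Proposition~\ref{prop:intro}(4)), I obtain isomorphisms $H^{i}(\Omega^{3}_{\tp}(\log\tx))\cong H^{i}(\Omega^{2}_{\tx})$ for $i=0,1$. By Hodge symmetry on the smooth projective threefold $\tx$, the right-hand side for $i=1$ equals $h^{1,2}(\tx)$, while for $i=0$ it equals $h^{0,2}(\tx)=h^{2}(\motx)$, which vanishes by Proposition~\ref{prop:intro}(3). Hence $h^{1}\Omega^{3}_{\tp}(\log\tx)=h^{1,2}(\tx)$ and $h^{0}\Omega^{3}_{\tp}(\log\tx)=0$.

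Feeding these into the long exact cohomology sequence of the quotient sequence,
\[
0 \lra H^{0}\Omega^{3}_{\tp}(\log\tx) \lra H^{0}\Omega^{3}_{\tp}(\tx) \lra H^{0}\Omega^{3}_{\tx}(\tx) \lra H^{1}\Omega^{3}_{\tp}(\log\tx) \lra H^{1}\Omega^{3}_{\tp}(\tx) \lra H^{1}\Omega^{3}_{\tx}(\tx),
\]
a dimension count gives
\[
h^{1}\Omega^{3}_{\tp}(\log\tx) = h^{0}\Omega^{3}_{\tx}(\tx) - h^{0}\Omega^{3}_{\tp}(\tx) + h^{0}\Omega^{3}_{\tp}(\log\tx) + \dim\ker\bigl(H^{1}\Omega^{3}_{\tp}(\tx) \lra H^{1}\Omega^{3}_{\tx}(\tx)\bigr),
\]
and substituting the two identifications from the previous paragraph produces the desired formula. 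The main obstacle is the second short exact sequence, i.e.\ correctly identifying the cokernel $\Omega^{3}_{\tp}(\tx)/\Omega^{3}_{\tp}(\log\tx)$ as $\Omega^{3}_{\tx}(\tx)$ (and not, say, $\Omega^{3}_{\tx}$) via the local calculation above; after that, everything is routine diagram chasing.
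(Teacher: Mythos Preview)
Your proof is correct and essentially identical to the paper's: both use the residue sequence together with $H^{i}(\Omega^{3}_{\tp})=0$ for $i\le2$ to identify $H^{i}\Omega^{3}_{\tp}(\log\tx)\cong H^{i}\Omega^{2}_{\tx}$ for $i=0,1$, and then read off the formula from the long exact sequence attached to $0\to\Omega^{3}_{\tp}(\log\tx)\to\Omega^{3}_{\tp}(\tx)\to\Omega^{3}_{\tx}(\tx)\to0$. The paper simply cites Esnault--Viehweg for the second short exact sequence where you verify it in local coordinates, and writes ``Serre duality'' where you invoke Hodge symmetry for $h^{0}(\Omega^{2}_{\tx})=h^{2}(\mo_{\tx})$; otherwise the arguments coincide.
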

\begin{proof}
  By Serre duality, $H^{0}\Omega^{2}_{\tx}=H^{2}(\motx)=0$.
  Since $\tilde X$ is smooth  the following
  sequence is exact (\cite[2.3(b)]{esnview})
    \[0\lra \Omega^{3}_{\tp} \lra \Omega^{3}_{\tp}(\log\tx)\lra
\Omega^{2}_{\tx}\lra0\]
which implies  
  \begin{eqnarray*}
    &&H^{0}\Omega^{2}_{\tx}\cong H^{0}\Omega^{3}_{\tp}(\log\tx)=0\\
    &&H^{1}\Omega^{2}_{\tx}\cong H^{1}\Omega^{3}_{\tp}(\log\tx).
  \end{eqnarray*}

Similarly, the following sequence is  exact (\cite[2.3(c)]{esnview})
  \[0\lra \Omega^{3}_{\tp}(\log\tx) \lra \Omega^{3}_{\tp}(\tx)\lra
    \Omega^{3}_{\tx}(\tx)\lra0.\]
 and the derived long exact sequence
  \[0\lra H^{0}\Omega^{3}_{\tp}(\tx)\lra H^{0}\Omega^{3}_{\tx}(\tx)\lra
H^{1}\Omega^{3}_{\tp}(\log\tx) \lra H^{1}\Omega^{3}_{\tp}(\tx)\lra
H^{1}\Omega^{3}_{\tx}(\tx) \]
implies the assertion of the lemma.
\end{proof}
\begin{cor} The following sequence is exact
  \begin{equation}
    \label{eq:cd1}
H^{0}\Omega^{3}_{\PP^{4}}(X) \lra
H^{0}\left(\Omega^{3}_{\PP^{4}}(X)\otimes\mo_{\Sigma}\right)
\lra
H^{1}\left(\Omega^{3}_{\tp}(\tx) \right)
\lra 0.    
  \end{equation}

\end{cor}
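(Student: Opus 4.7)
The plan is to identify $H^{1}\Omega^{3}_{\tp}(\tx)$ with $H^{1}(\Omega^{3}_{\PP^{4}}(X)\otimes\mathcal J_{\Sigma})$ via the Leray spectral sequence, then deduce the displayed exact sequence from the ideal sheaf sequence on $\PP^{4}$ combined with Bott's vanishing. Since $\sigma^{*}X=\tx+3E$, the projection formula reduces the computation of $R^{\bullet}\sigma_{*}\Omega^{3}_{\tp}(\tx)$ to that of $R^{\bullet}\sigma_{*}\Omega^{3}_{\tp}(-3E)$, twisted by $\mo_{\PP^{4}}(X)$.

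The central sheaf-level claim is
\[
\sigma_{*}\Omega^{3}_{\tp}(-3E)\cong \Omega^{3}_{\PP^{4}}\otimes \mathcal J_{\Sigma},\qquad R^{i}\sigma_{*}\Omega^{3}_{\tp}(-3E)=0\text{ for }i>0,
\]
both being local questions at each $P_{i}\in\Sigma$. For the first, in standard blow-up coordinates $x_{1}=u$, $x_{j}=uv_{j}$ one computes $\sigma^{*}(dx_{1}\wedge dx_{i}\wedge dx_{j})=u^{2}\,du\wedge dv_{i}\wedge dv_{j}$, while $\sigma^{*}(dx_{2}\wedge dx_{3}\wedge dx_{4})$ equals $u^{2}$ times a combination of $du\wedge dv_{i}\wedge dv_{j}$ plus the single term $u^{3}\,dv_{2}\wedge dv_{3}\wedge dv_{4}$. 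Therefore a local section $\omega=\sum f_{I}\,dx_{I}$ satisfies $\sigma^{*}\omega\in\Omega^{3}_{\tp}(-3E)$ if and only if every $f_{I}$ vanishes at $P_{i}$. For the higher direct image, I would iterate the exact sequences
\[
0\lra \Omega^{3}_{\tp}(-mE)\lra \Omega^{3}_{\tp}(-(m-1)E)\lra \Omega^{3}_{\tp}|_{E}\otimes \mo_{E}(m-1)\lra 0
\]
for $m=1,2,3$, starting from the standard vanishing $R^{i}\sigma_{*}\Omega^{3}_{\tp}=0$ ($i>0$) for the blow-up at a smooth point. Fibrewise, the conormal filtration
\[
0\lra \Omega^{2}_{\PP^{3}}(k+1)\lra \Omega^{3}_{\tp}|_{E_{i}}\otimes \mo_{E_{i}}(k)\lra \mo_{\PP^{3}}(k-4)\lra 0
\]
together with Bott's formula on $\PP^{3}$ yields $H^{1}(E_{i},\Omega^{3}_{\tp}|_{E_{i}}\otimes \mo_{E_{i}}(k))=0$ for $k=0,1,2$, which propagates through the filtration to kill $R^{1}\sigma_{*}\Omega^{3}_{\tp}(-3E)$.

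With these identities in hand, Leray (degenerating since $R^{i}\sigma_{*}=0$ for $i>0$) gives $H^{1}\Omega^{3}_{\tp}(\tx)\cong H^{1}(\Omega^{3}_{\PP^{4}}(X)\otimes \mathcal J_{\Sigma})$. The long cohomology sequence of
\[
0\lra \Omega^{3}_{\PP^{4}}(X)\otimes \mathcal J_{\Sigma}\lra \Omega^{3}_{\PP^{4}}(X)\lra \Omega^{3}_{\PP^{4}}(X)\otimes \mo_{\Sigma}\lra 0,
\]
combined with Bott's vanishing $H^{1}\Omega^{3}_{\PP^{4}}(X)=0$ (immediate from $\Omega^{3}_{\PP^{4}}(d)\cong T_{\PP^{4}}(d-5)$ and the twisted Euler sequence), then produces the corollary. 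The main obstacle is the sheaf-level analysis: beyond the explicit pullback calculation identifying $\sigma_{*}\Omega^{3}_{\tp}(-3E)$, one must propagate Bott vanishing on the exceptional $\PP^{3}$ through the filtration by powers of $\mo(-E)$ to ensure $R^{1}\sigma_{*}\Omega^{3}_{\tp}(-3E)=0$, without which the Leray comparison would not be an isomorphism.
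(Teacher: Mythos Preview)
Your overall strategy coincides with the paper's: establish $\sigma_{*}\Omega^{3}_{\tp}(\tx)\cong\Omega^{3}_{\PP^{4}}(X)\otimes\mathcal J_{\Sigma}$ with vanishing higher direct images, apply Leray, and finish with the ideal--sheaf sequence and Bott vanishing $H^{1}\Omega^{3}_{\PP^{4}}(X)=0$. The difference lies only in how the direct images are computed.

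There is, however, a gap in your filtration argument for $R^{1}\sigma_{*}\Omega^{3}_{\tp}(-3E)=0$. Vanishing of $H^{1}(E_{i},\Omega^{3}_{\tp}|_{E_{i}}(k))$ alone does not propagate $R^{1}=0$ from $m-1$ to $m$: the long exact sequence of direct images is
\[
\sigma_{*}\Omega^{3}_{\tp}\bigl(-(m-1)E\bigr)\lra H^{0}\bigl(E,\Omega^{3}_{\tp}|_{E}(m-1)\bigr)\lra R^{1}\sigma_{*}\Omega^{3}_{\tp}(-mE)\lra R^{1}\sigma_{*}\Omega^{3}_{\tp}\bigl(-(m-1)E\bigr),
\]
so you also need the first arrow to be surjective. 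For $k=0,1$ this is automatic since your conormal sequence gives $H^{0}=0$, but for $k=2$ the same sequence yields $H^{0}(E_{i},\Omega^{3}_{\tp}|_{E_{i}}(2))\cong H^{0}(\Omega^{2}_{\PP^{3}}(3))\cong\CC^{4}\neq 0$. The repair is available from your own work: the earlier steps give $\sigma_{*}\Omega^{3}_{\tp}(-2E)=\Omega^{3}_{\PP^{4}}$, and combined with your local computation $\sigma_{*}\Omega^{3}_{\tp}(-3E)=\Omega^{3}_{\PP^{4}}\otimes\mathcal J_{\Sigma}$ the restriction map has image of length $4$ at each $P_{i}$, hence is onto. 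But this surjectivity must be argued, not subsumed under ``$H^{1}=0$ propagates''.

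For comparison, the paper bypasses the filtration entirely. It uses the identification $\sigma^{*}\Omega^{3}_{\PP^{4}}=\Omega^{3}_{\tp}(\log E)(-3E)$ and the single residue exact sequence
\[
0\lra \sigma^{*}\Omega^{3}_{\PP^{4}}(X)\otimes\mo_{\tp}(-E)\lra \Omega^{3}_{\tp}(\tx)\lra \Omega^{3}_{E}(3)\lra 0,
\]
whose right--hand term restricts on each $E_{i}\cong\PP^{3}$ to $\mo_{\PP^{3}}(-1)$ and is therefore acyclic; the projection formula together with Proposition~\ref{prop:intro} then delivers both $\sigma_{*}\Omega^{3}_{\tp}(\tx)=\Omega^{3}_{\PP^{4}}(X)\otimes\mathcal J_{\Sigma}$ and $R^{i}\sigma_{*}=0$ in one stroke, without any coordinate computation or inductive step.
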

\begin{proof}
  By direct computations
  $\sigma^{*}\Omega^{3}_{\PP^{4}}=\Omega^{3}_{\tp}(\log E)(-3E)$, so
  we have an exact sequence (\cite[2.3(c)]{esnview})
  \[0\lra\sigma^{*}\Omega^{3}_{\PP^{4}}(X)\otimes\mo_{\tp}(-E) \lra
  \Omega^{3}_{\tp}(\tx) \lra \Omega^{3}_{E}(3)\lra0,
  \]
  applying the direct image yields   
  \[\sigma_{*}\Omega^{3}_{\tp}(\tx)=\Omega^{3}_{\tp}(X)\otimes\mathcal
       J_{\Sigma},\qquad
       R^{i}\sigma_{*}\Omega^{3}_{\tp}(\tx)=0 \quad\text{ for }i>0.
     \]
     From the Leray spectral sequence we get
  \[H^{i}\Omega^{3}_{\tp}(\tx)=H^{i}(\Omega^{3}_{\PP^{4}}(X)\otimes\mathcal
  J_{\Sigma})\]
and the assertion follows from the cohomology derived sequence
associated to
\[ 0 \lra \Omega^{3}_{\PP^{4}}(X)\otimes\mathcal
  J_{\Sigma}\lra \Omega^{3}_{\PP^{4}}(X) \lra \Omega^{3}_{\PP^{4}}(X)\otimes
  \mo_{\Sigma} \lra 0.
\]
\end{proof}

\begin{cor}
The following sequence is exact
\begin{equation}\label{eq:cd2}
    H^{0}(\Omega^{4}_{\PP^{4}}(2X)) \lra
    H^{0}(\Omega^{4}_{\PP^{4}}(2X)\otimes \mo_{3\Sigma}) \lra
    H^{1}(\Omega^{3}_{\tx}(\tx)) \lra0
\end{equation}
\end{cor}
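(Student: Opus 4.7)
The plan is to run the same strategy as in the preceding corollary, but using a residue/adjunction sequence for the top-degree form $\Omega^{4}_{\tp}$ instead of $\Omega^{3}_{\tp}$. First, I would tensor the structure sequence $0\to\mo_{\tp}(-\tx)\to\mo_{\tp}\to\mo_{\tx}\to0$ with the line bundle $\Omega^{4}_{\tp}(2\tx)$ and identify the restriction $\Omega^{4}_{\tp}(2\tx)|_{\tx}\cong \omega_{\tx}(\tx)=\Omega^{3}_{\tx}(\tx)$ via the adjunction formula $\omega_{\tx}=\Omega^{4}_{\tp}(\tx)|_{\tx}$. This produces the short exact sequence
\[
0\lra \Omega^{4}_{\tp}(\tx)\lra\Omega^{4}_{\tp}(2\tx)\lra \Omega^{3}_{\tx}(\tx)\lra 0.
\]

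Next I would take the associated long exact sequence in cohomology and rewrite the cohomology of the first two sheaves downstairs using Proposition~\ref{prop:intro}(5) and~(6). The piece of interest reads
\[
H^{1}\Omega^{4}_{\PP^{4}}(X)\lra H^{1}\!\bigl(\Omega^{4}_{\PP^{4}}(2X)\otimes\mathcal J_{3\Sigma}\bigr)\lra H^{1}\Omega^{3}_{\tx}(\tx)\lra H^{2}\Omega^{4}_{\PP^{4}}(X).
\]
Since $\Omega^{4}_{\PP^{4}}(X)\cong \mo_{\PP^{4}}(d-5)$, the standard vanishing $H^{i}\mo_{\PP^{4}}(k)=0$ for $0<i<4$ annihilates both outer groups, so the middle arrow is an isomorphism
\[
H^{1}\!\bigl(\Omega^{4}_{\PP^{4}}(2X)\otimes\mathcal J_{3\Sigma}\bigr)\cong H^{1}\Omega^{3}_{\tx}(\tx).
\]

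To finish, I would twist the ideal sequence $0\to\mathcal J_{3\Sigma}\to\mo_{\PP^{4}}\to\mo_{3\Sigma}\to 0$ by $\Omega^{4}_{\PP^{4}}(2X)$ and use $H^{1}\Omega^{4}_{\PP^{4}}(2X)=H^{1}\mo_{\PP^{4}}(2d-5)=0$ to truncate the resulting long exact sequence on the right; substituting the isomorphism above yields the stated exact sequence~\eqref{eq:cd2}. There is no real obstacle here: the only piece of genuine content is the adjunction identification of $\Omega^{4}_{\tp}(2\tx)|_{\tx}$ with $\Omega^{3}_{\tx}(\tx)$, and the remainder amounts to routine intermediate-cohomology vanishing on $\PP^{4}$ and bookkeeping in two long exact sequences.
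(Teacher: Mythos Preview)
Your argument is correct and follows essentially the same route as the paper: adjunction gives the short exact sequence $0\to\Omega^{4}_{\tp}(\tx)\to\Omega^{4}_{\tp}(2\tx)\to\Omega^{3}_{\tx}(\tx)\to0$, Proposition~\ref{prop:intro}(5),(6) identify the cohomology of the first two terms, and the ideal sequence for $\mathcal J_{3\Sigma}$ yields~\eqref{eq:cd2}. You are simply more explicit than the paper about the intermediate vanishing $H^{1}\mo_{\PP^{4}}(d-5)=H^{2}\mo_{\PP^{4}}(d-5)=H^{1}\mo_{\PP^{4}}(2d-5)=0$ that makes the long exact sequences collapse.
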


\begin{proof}
  By the adjunction formula $\Omega^{3}_{\tx}(\tx)
  \cong\Omega^{4}_{\tp}(2\tx)|\tx$, so we have an exact sequence
  \[0\lra\Omega^{4}_{\tp}(\tx) \lra \Omega^{4}_{\tp}(2\tx) \lra
    \Omega^{3}_{\tx}(\tx) \lra 0,\]
  the derived long exact sequence and the Proposition~\ref{prop:intro} yield 
\[H^{1}(\Omega^{3}_{\tx}(\tx))\cong
  H^{1}(\Omega^{4}_{\PP^{4}}(2X)\otimes \mathcal J_{3\Sigma}).
\]
The assertion follows now from the exact sequence 
\[0\lra \Omega^{4}_{\PP^{4}}(2X)\otimes \mathcal J_{3\Sigma} \lra
  \Omega^{4}_{\PP^{4}}(2X) \lra
  \Omega^{4}_{\PP^{4}}(2X)\otimes \mathcal \mo_{3\Sigma} \lra 0.
  \]
\end{proof}

\section{Main result}

We keep the notation introduced in the previous section. 
\begin{defn}
  Define the \emph{equisingular ideal of $X$} as 
  \[I_{\rm eq}:=\bigcap_{i=1}^{\mu}(m_{i}^{3}+\JF),\]
where $m_{i}$ is the (maximal) ideal of $P_{i}$ and $\JF$ is the jacobian
ideal of $F$.
\end{defn}

Let
$S=\bigoplus\limits_{d=0}^{\infty}S_{d}=\CC[X_{0},\dots,X_{4}]$ be the
graded ring of polynomials in five variables, for a homogeneous ideal $I\subset
S$ we denote by $I^{(d)}:=I\cap S^{d}$ the degree $d$ graded summand
of $I$.

\begin{thm}
\label{t:main}
  \begin{eqnarray*}
   &&h^{1,1}(\tilde X)=\dim (I_{\rm eq}^{(2d-5)})-\binom {2d-1}4+12\mu+1\\
 &&h^{1,2}(\tilde X)=\dim (I_{\rm eq}^{(2d-5)})-5\binom d4
 \end{eqnarray*}
\end{thm}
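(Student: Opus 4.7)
The plan is to compute $h^{1,2}(\tilde X)$ directly from Lemma~\ref{lem:h12} together with the exact sequences (\ref{eq:cd1}) and (\ref{eq:cd2}), and then extract $h^{1,1}(\tilde X)$ from a topological Euler characteristic comparison with the smooth hypersurface of the same degree. The four ``outer'' cohomological dimensions entering Lemma~\ref{lem:h12} via the two corollaries are routine: $h^{0}(\Omega^{4}_{\PP^{4}}(2X)) = \binom{2d-1}{4}$; $h^{0}(\Omega^{3}_{\PP^{4}}(X)) = 5\binom{d}{4} - \binom{d-1}{4}$, via the Euler sequence for $T_{\PP^{4}}(d-5)\cong \Omega^{3}_{\PP^{4}}(d)$; $h^{0}(\Omega^{3}_{\PP^{4}}(X)\otimes \mo_{\Sigma}) = 4\mu$; and $h^{0}(\Omega^{4}_{\PP^{4}}(2X)\otimes \mo_{3\Sigma}) = 15\mu$, because $\dim \mo_{\PP^{4},P_i}/m_i^{3} = 1+4+10 = 15$.

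The main obstacle is the term $\dim\operatorname{Ker}(H^{1}\Omega^{3}_{\tp}(\tx)\to H^{1}\Omega^{3}_{\tx}(\tx))$. My plan is to splice (\ref{eq:cd1}) and (\ref{eq:cd2}) into a commutative square whose vertical arrow is induced by ``wedging with $dF$'', viewed as the natural map $\Omega^{3}_{\PP^{4}}(X)\to\Omega^{4}_{\PP^{4}}(2X)$; on cokernels this recovers exactly the map from Lemma~\ref{lem:h12}. Since $dF\in m_i^{2}$ at each triple point and its leading part $df_{3}$ has partial derivatives $\partial f_{3}/\partial x_{j}$ spanning a $4$-dimensional subspace of $m_i^{2}/m_i^{3}$, a local section of $\Omega^{3}_{\PP^{4}}(X)\otimes\mo_{\Sigma}$ lands in the image of the global evaluation $S_{2d-5}\to\bigoplus_{i}\mo_{\PP^{4},P_i}/m_i^{3}$ after wedging precisely when the corresponding global polynomial lies in $m_i^{3}+\JF$ at every $P_i$, i.e., in $I_{\rm eq}^{(2d-5)}$. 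Identifying the kernel with the appropriate quotient built from $I_{\rm eq}^{(2d-5)}$ and $I_{3\Sigma}^{(2d-5)}$ --- and carefully handling defects arising when the evaluation maps fail to be surjective --- is the technical heart of the argument, and is precisely where the explicit representatives of \cite{pet-st} are needed. Bookkeeping the dimensions then produces $h^{1,2}(\tilde X) = \dim I_{\rm eq}^{(2d-5)} - 5\binom{d}{4}$; a sanity check is that in the smooth case this correctly recovers the Griffiths value $h^{1,2}(X_{\rm smooth}) = \binom{2d-1}{4} - 5\binom{d}{4}$.

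For $h^{1,1}(\tilde X)$ I would avoid redoing the entire differential-form analysis and instead use topological Euler characteristic. An ordinary triple point in $\CC^{4}$ has Milnor number $2^{4} = 16$, so smoothing one changes $\chi$ by $-16$, while the crepant resolution replaces it by the smooth cubic surface $\tilde X\cap E_{i}$ of Euler number $9$, a net change of $+8$. Hence $\chi(\tilde X) = \chi(X_{\rm smooth}) + 24\mu$. Since ordinary triple points are canonical (in particular rational), one has $h^{1,0}(\tilde X) = h^{2,0}(\tilde X) = 0$ and $h^{3,0}(\tilde X) = \binom{d-1}{4}$, matching the smooth values. Expanding $\chi$ in Hodge numbers gives
\[
h^{1,1}(\tilde X) - h^{1,2}(\tilde X) = 12\mu + 1 - h^{1,2}(X_{\rm smooth}),
\]
and substituting the formula just established together with the smooth value of $h^{1,2}$ yields the stated expression $h^{1,1}(\tilde X) = \dim I_{\rm eq}^{(2d-5)} - \binom{2d-1}{4} + 12\mu + 1$.
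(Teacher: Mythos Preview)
Your proposal is correct and follows essentially the same route as the paper: the paper also builds the commutative square linking \eqref{eq:cd1} and \eqref{eq:cd2} with vertical map the exterior derivative (which, as you note, reduces at the singular points to wedging with $dF$), uses injectivity of $\beta$ coming from the linear independence of the $\partial F/\partial X_i$ modulo $m_j^{3}$, identifies $\operatorname{Im}\eta\cap\operatorname{Im}\beta$ with $(I_{\rm eq}/\bigcap_j m_j^{3})^{(2d-5)}$, and then recovers $h^{1,1}$ from the Euler characteristic via the Milnor number $16$ and $\chi(\text{cubic surface})=9$. The only cosmetic difference is that the paper packages the diagram chase as $\dim\operatorname{Ker}\Phi = h^{1}(\Omega^{3}_{\tp}(\tx)) - \dim\operatorname{Im}\beta + \dim(\operatorname{Im}\eta\cap\operatorname{Im}\beta)$ rather than tracking the individual $h^{0}$ terms, but the content is identical.
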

\begin{proof}
  Consider the following commutative diagram with exact rows
  \eqref{eq:cd1} and   \eqref{eq:cd2} 
\begin{equation*}
  \begin{diagram}\dgARROWLENGTH=1.2em
\node{(S^{d-4})^{\oplus5}/S^{d-5}}\arrow{s,r}{\cong} \node{\CC^{4\mu}}\arrow{s,r}{\cong}\\
\node{H^{0}(\Omega^{3}_{\PP^{4}}(X))} \arrow{s,r}{\xi}\arrow{e,t}{\theta} 
\node{H^{0}\left(\Omega^{3}_{\PP^{4}}(X)\otimes\mo_{\Sigma}\right)}
\arrow{s,r}{\beta}\arrow{e,t}{\alpha}
\node{H^{1}\left(\Omega^{3}_{\tp}(\tx) \right)}
\arrow{s,r}{\Phi}\arrow{e} \node{0}\\
    \node{H^{0}(\Omega^{4}_{\PP^{4}}(2X))} \arrow {e,t}{\eta}
    \node{H^{0}(\Omega^{4}_{\PP^{4}}(2X)\otimes \mo_{3\Sigma})} \arrow{e,t}{\gamma}
    \node{H^{1}(\Omega^{3}_{\tx}(\tx))} \arrow{e}\node0\\
\node{S^{2d-5}}\arrow{n,r}{\cong}\node{\CC^{15\mu}}\arrow{n,r}{\cong}
  \end{diagram}
\end{equation*}
We shall describe explicitly all maps in the above diagram. Denote by $K_{j}$  the contraction with the vector field
$\frac{\partial}{\partial X_{j}}$ and by $\Omega$ the $4$--form
$\sum_{i=0}^{4}(-1)^{i}dX_{0}\wedge\dots\wedge \widehat{dX_{i}}
\wedge\dots\wedge dX_{4}$.
The two vertical isomorphisms in the first column are given by
\[
(S^{d-4})^{\oplus5}\ni(A_{0},\dots,A_{4})\longmapsto \sum_{i=0}^{k}\frac{A_{i}}FK_{i}\Omega
\in H^{0}(\Omega^{3}_{\PP^{4}}(X))
\]
(with the inclusion $S^{d-5}\ni A\longmapsto
(AX_{0},\dots,AX_{4})\in (S^{d-4})^{\oplus5}$).
and
\[
  S^{(2d-5)}\ni A\longmapsto \frac A{F^{2}}\Omega\in 
  H^{0}(\Omega^{4}_{\PP^{4}}(2X))
\]
In terms of these isomorphisms the homomorphism $\eta$ associates to a
degree $2d-5$ homogeneous polynomial its 
3--jets at the singular points $P_{1},\dots,P_{\mu}$, while $\theta$
associates to a quintuple $A_{0},\dots,A_{4}$ the values at 
singular points $P_{j}$ in the vector space $(\CC^{4})^{\mu}$
identified with $\bigoplus\limits_{j=1}^{\mu}(\CC^{5}/P_{j}\CC)$.
Finally $\xi$ is the exterior derivative $\omega\mapsto d\omega$ so
\[d\left(\sum_{i=0}^{4}\frac{A_{i}}FK_{i}\Omega\right) = 
\frac1{F^{2}}\sum_{i=0}^{4}(F\frac{\partial A_{i}}{\partial X_{i}} -
A_{i}\frac{\partial F}{\partial X_{i}}) \Omega\]
and consequently 
$\beta(A_{0}^{1},\dots,A_{4}^{1},\dots,A_{0}^{\mu},\dots,A_{4}^{\mu})$
is given by 3--jets of $\sum_{i=0}^{4}A_{i}^{j}(P_{j})\frac{\partial F}{\partial
X_{i}}$ at singular points $P_{1},\dots, P_{\mu}$. 
As the polynomial $F$ has an ordinary triple point at $P_{j}$ partial derivatives
of $F$ at $P_{j}$ are linearly independent modulo the third power $m_{j}^{3}$
of the maximal ideal $m_{j}$, and consequently the map $\beta$ is
injective. In particular $\dim\operatorname{Im}(\beta)=4\mu$. 
Diagram chasing with simple linear algebra yields
\[\dim\operatorname{Ker}\Phi=
h^{1}(\Omega^{3}_{\tp}(\tx)) - \dim(\operatorname{Im}\beta)
+\dim(\operatorname{Im}\eta\cap\operatorname{Im}\beta)
.\]
The local description shows that 
$\operatorname{Im}(\eta)\cap\operatorname{Im}(\beta)\cong (I_{\rm
  eq}/\bigcap _{j=1}^{\mu} m_{j}^{3})^{(2d-5)}$.
Using the   Lemma~\ref{lem:h12} we get formula for $h^{1,2}$.

Observe that the Milnor number at
an ordinary triple point is $16$, resolution replaces this point with a smooth
cubic surface with the Euler number 9, finally $e(\tilde
X)=-d^{4}+5d^{3}-10d^{2}+10d+24\mu$.  
As the resolution of $X$ is crepant we have $h^{0,3}(\tilde
X)=\binom{d-1}4$ and the formula for $h^{1,1}$ follows.
\end{proof}
Since the point  $P_{i}$ ($i=1,\dots,\mu$) is an ordinary triple point, the codimension of the
ideal $(m_{i}^{3}+\JF)$ equals 11, consequently the expected dimension
of $I_{\rm eq}^{(2d-5)}$ is $\binom{2d-1}5-11\mu$. We shall call 
the difference between ``the actual dimension'' and ``the expected
dimension'' the defect. 
\begin{defn}
  Define the \emph{defect} of the hypersurface $X$ as the 
  integer
  \[\delta :=\dim(I_{\rm eq}^{(2d-5)})-\left(\binom{2d-1}4-11\mu\right).\]
\end{defn}
\begin{cor}\label{cor:main}
  \[
   h^{1,1}(\tilde X)=1+\mu+\delta,\qquad
 h^{1,2}(\tilde X)=h^{1,2}(X_{smooth})-11\mu+\delta
 \]
A hypersurface $X$ is $\mathbb Q$--factorial iff it has no defect.
\end{cor}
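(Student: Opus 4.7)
The Hodge number identities are routine algebraic consequences of Theorem~\ref{t:main}. Substituting $\dim I_{\rm eq}^{(2d-5)}=\binom{2d-1}{4}-11\mu+\delta$ into the first formula collapses to $h^{1,1}(\tx)=1+\mu+\delta$. For the second I would invoke the classical Griffiths formula
\[h^{1,2}(X_{\rm smooth})=\binom{2d-1}{4}-5\binom{d}{4},\]
which is the dimension of the $(2d-5)$-graded piece of the Jacobian ring of a smooth degree-$d$ hypersurface in $\PP^{4}$; it is read off the Koszul resolution on the five partials, whose higher terms vanish in this degree. Substitution again gives $h^{1,2}(\tx)=h^{1,2}(X_{\rm smooth})-11\mu+\delta$.

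For the $\mathbb Q$-factoriality claim I would pass to divisor class groups. Proposition~\ref{prop:intro}(3) gives $H^{1}(\motx)=H^{2}(\motx)=0$, so the exponential sequence identifies $\operatorname{Pic}(\tx)\otimes\mathbb Q$ with $H^{1,1}(\tx)$; hence the Picard number of $\tx$ equals $1+\mu+\delta$. Next I would set up the exact sequence
\[0\lra\bigoplus_{i=1}^{\mu}\mathbb Z\,[E_{i}]\lra\operatorname{Pic}(\tx)\xrightarrow{\sigma_{\ast}}\operatorname{Cl}(X)\lra 0,\]
where $\sigma_{\ast}$ is the push-forward of Weil divisors. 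Surjectivity comes from the strict-transform construction (a Weil divisor on $X$ has strict transform that is Cartier on the smooth $\tx$); the kernel consists of divisors contracted by $\sigma$, and the $[E_{i}]$ are linearly independent in $\operatorname{Pic}(\tx)$ by pairing each $[E_{i}]$ against a curve $C_{i}\subset E_{i}$ on which $E_{i}$ has nontrivial degree. Reading off ranks gives $\operatorname{rank}\operatorname{Cl}(X)=1+\delta$.

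To conclude, the Grothendieck--Lefschetz theorem applied to the normal projective threefold $X\subset\PP^{4}$ with isolated singularities gives $\operatorname{Pic}(X)=\mathbb Z\cdot\mo_{X}(1)$. Since $X$ is $\mathbb Q$-factorial exactly when the inclusion $\operatorname{Pic}(X)_{\mathbb Q}\hookrightarrow\operatorname{Cl}(X)_{\mathbb Q}$ is an isomorphism, this is equivalent to $\operatorname{rank}\operatorname{Cl}(X)=1$, i.e.\ $\delta=0$. The one step that I expect to take real care is the exact sequence above---in particular the linear independence of the $[E_{i}]$ and the precise identification of $\ker\sigma_{\ast}$---but this is a local question at each ordinary triple point, straightforward from the geometry of the exceptional cubic surfaces.
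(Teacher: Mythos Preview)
Your argument is correct. The paper gives no proof of this corollary at all: the two Hodge-number identities are meant to follow by the direct substitution you carry out (together with the Griffiths identification $h^{1,2}(X_{\rm smooth})=\dim R_{2d-5}=\binom{2d-1}{4}-5\binom{d}{4}$), and the $\mathbb Q$-factoriality statement is asserted without justification. Your treatment of the latter---Picard number of $\tx$ from the exponential sequence and $h^{2,0}(\tx)=0$, the push-forward sequence $0\to\bigoplus_i\mathbb Z[E_i]\to\operatorname{Pic}(\tx)\to\operatorname{Cl}(X)\to 0$, and Grothendieck--Lefschetz for $\operatorname{Pic}(X)$---is the standard route and supplies exactly what the paper leaves implicit; the independence of the $[E_i]$ via intersection with curves on the disjoint exceptional cubic surfaces is indeed the clean way to handle the one point you flag.
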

\begin{rem}\label{rem:fin}
  The above definition of defect is a direct generalization the definition of
  the defect of a hypersurface with A--D--E singularities in
  \cite[Def.~2.1]{CR}. 
\end{rem}
\begin{rem}\label{rem:wps}
  Our main theorem generalizes (with the same proof) to the case of a
  degree $d$ hypersurface $X$ in a weighted projective space
  $\PP(w_{0},\dots,w_{4})$ with ordinary triple points provided
  $X\cap\Sing (\PP(w_{0},\dots,w_{4}))=\emptyset$. The last condition
  implies in particular that the weights $w_{i}$ are pairwise
  co--prime and  divide the degree $d$.
  
  In this situation we define the \emph{defect} as
  \[\delta:=\dim(I_{eq}^{(2d-|w|)})-(\dim S^{2d-|w|}-11\mu)\]
  and the same arguments (using \cite{dolg}) yield the following
  weighted version of the main theorem
  \begin{eqnarray*}
   &&h^{1,1}(\tilde X)=\dim (I_{\rm eq}^{(2d-|w|)})-\dim
      S^{2d-|w|}+12\mu+1=1+\mu+\delta\\ 
 &&h^{1,2}(\tilde X)=\dim (I_{\rm eq}^{(2d-|w|)})-\sum_{i=0}^{k}\dim
    S^{d+w_{i}-|w|} = h^{1,2}(X_{smooth})-11\mu+\delta
 \end{eqnarray*}

  The most important example is a triple solid, if $D\subset \PP^{3}$
  is a surface in projective three space of degree divisible by three
  than there exists a triple cyclic cover $\pi:X\lra \PP^{3}$
  branched along $D$. The singularities of $X$ corresponds
  one--to--one to the singularities of $D$, in particular an ordinary
  triple point on $D$ gives an ordinary triple point on $X$. The
  threefold $X$ is given in $\PP(1,1,1,1,d/3)$ by an equation
  $x_{4}^{3}=g(x_{0},\dots,x_{3})$, where $g$ is an equation of $D$.  
  We get a formula analogous to the Clemens defect formula for double
  solid (cf. \cite{Clemens}) with defect defined by the degree
  $\frac53d-4$ component of the equisingular ideal.
\end{rem}

\begin{exmp}
  We shall study a degree six surface in $\PP^{3}$ with ten
  ordinary triple points constructed  in \cite{stevens} as an element
  of a three dimensional family. Let
  \begin{eqnarray*}
    K_1 &=& 2x_1^2-(\varepsilon+2)x_3+\varepsilon^2x_1x_3\;,\\
    K_2 &=& -x_2^2+2\varepsilon x_1+x_2+\varepsilon^2 x_1x_2\;,\\
    K_3 &=& 2x_3^2-2\varepsilon^2 x_2+(6\varepsilon+2)x_3+4\varepsilon^2 x_2x_3\;,\\
    Q   &=& -(\varepsilon+2-x_1)(\varepsilon-x_2)(\varepsilon^2+x_3) +
            x_1(x_2-1)(x_3+3\varepsilon+1)\;. 
\end{eqnarray*}
where $\varepsilon$ is a third root of unity. Then the degree six polynomial
  \begin{eqnarray*}    
    27K_{1}K_{2}K_{3}+2Q^{3}=0
  \end{eqnarray*}
  defines an element of three dimensional family of sextic surfaces
  with ten (maximal possible) number of ordinary triple
  points. Moreover for $p = 67$ with $\varepsilon =-30$ all singular
  points are defined over the base field (\cite{stevens}). Computations conducted with 
  \texttt{Singular} yield $\dim(I_{\rm eq}^{(6)})=30,\, \mu=10,\; \delta=10,\; h^{1,1}=21,\,
  h^{1,2}=3$.   
\end{exmp}

\emph{Acknowledgments}.
I would like to thank Remke Kloosterman for helpful discussions on
this topic.

\parindent=0cm


\begin{thebibliography}{99}
\bibitem{Clemens} C.~H.~Clemens, \emph{Double solids.} Adv.\ in
  Math. \textbf{47} (1983), 107--230. 
\bibitem{CR} S.~Cynk, S.~Rams, \emph{Defect via differential forms with logarithmic
    poles},  Math. Nachr. 284 (2011), no. 17--18, 2148--2158. 
\bibitem{Dimca} A.~Dimca, \emph{Betti numbers of hyperplanes and defects of
linear systems}, Duke Math. Jour. \textbf{60} (1990),285--294.
\bibitem{dolg} I.~Dolgachev, 
\emph{Weighted projective varieties}. Group actions and vector fields (Vancouver, B.C., 1981), 34--71, 
Lecture Notes in Math., 956, Springer, Berlin, 1982. 
\bibitem{EPS} S.~Endrass, U.~Persson, J.~Stevens, 
  \emph{Surfaces with triple points}. J. Algebraic Geom. 12 (2003), no. 2,
  367--404.
\bibitem{esnview}  H.~Esnault, E.~Viehweg, \emph{Lectures on vanishing
    theorems.} Birkh\"auser 1992.
\bibitem{Naie} D.~Naie, \emph{Quintics with three triple points, sextics
  with five and degenerations}. Manuscripta Math. 117 (2005), no. 2,
  153--171. 
\bibitem{pet-st} C.~Peters, J.~Steenbrink, \emph{Infinitesimal
    variations of Hodge structure and the generic Torelli problem 
for projective hypersurfaces.}  Classification of
algebraic and analytic manifolds (Katata, 1982), 399--463, Progr. Math. 39, Birkh\"auser 1983.
\bibitem{pet-st-book} C.~Peters, J.~Steenbrink, \emph{Mixed Hodge
    Structures.} Springer 2008.
\bibitem{Stagnaro} E.~Stagnaro, \emph{A degree nine surface with 39
    triple points}. Ann. Univ. Ferrara Sez. VII (N.S.) 50 (2004),
  111--121. 
\bibitem{stevens} J.~Stevens, \emph{Sextic surfaces with 10 triple
  points}. Singularities and computer algebra, 315--331, London
  Math. Soc. Lecture Note Ser., 324, Cambridge Univ. Press, Cambridge,
  2006. 
\bibitem{Werner} J. Werner, \emph{Kleine Aufl\"osungen spezieller
  dreidimensionaler Variet\"aten}, Bonner Math. Scriften \textbf{186}
(1987). 

\end{thebibliography}
\end{document}